\newcommand{\calA}{\mathcal{A}}
\newcommand{\calO}{\mathcal{O}}
\newcommand{\calM}{\mathcal{M}}
\newcommand{\calS}{\mathcal{S}}
\newcommand{\ZZ}{\mathbb{Z}}
\newcommand{\QQ}{\mathbb{Q}}
\newcommand{\RR}{\mathbb{R}}
\newcommand{\kk}{\Bbbk}
\newcommand{\eb}{\mathbf{e}}
\newcommand{\tb}{\mathbf{t}}
\newcommand{\Hom}{\operatorname{Hom}}
\def\opn#1#2{\def#1{\operatorname{#2}}} 
\opn\Cl{Cl} \opn\conv{conv} \opn\deg{deg} \opn\rank{rank} \opn\Spec{Spec} \opn\Stab{Stab} \opn\aff{aff} \opn\div{div} \opn\GL{GL}
\opn\cone{cone} \opn\End{End} \opn\Hom{Hom} \opn\mod{mod} \opn\gldim{gldim} \opn\pdim{pdim} \opn\diag{diag} \opn\vert{vert}
\opn\Block{Block} \opn\Pyr{Pyr} \opn\max{max} \opn\min{min} \opn\ini{in} \opn\rev{rev} \opn\ker{ker} \opn\lat{lat} \opn\pull{pull} \opn\rev{rev}
\newtheorem{thm}{Theorem}[section]
\newtheorem{cor}[thm]{Corollary}
\newtheorem{lem}[thm]{Lemma}
\newtheorem{prop}[thm]{Proposition}
\newtheorem{q}{Question}
\theoremstyle{definition}
\newtheorem{defi}[thm]{Definition}
\newtheorem{ex}[thm]{Example}
\theoremstyle{remark}
\begin{document}

\title{Torsionfreeness for divisor class groups of toric rings of integral polytopes}
\author{Koji Matsushita}

\address{Department of Pure and Applied Mathematics, Graduate School of Information Science and Technology, Osaka University, Suita, Osaka 565-0871, Japan}
\email{k-matsushita@ist.osaka-u.ac.jp}

\subjclass[2020]{
Primary 13F65; 
Secondary 13C20, 
52B20.} 
\keywords{divisor class groups, toric rings, integral polytopes}

\maketitle

\begin{abstract} 
In the present paper, we give some sufficient conditions for  $\Cl(\kk[P])$ to be torsionfree, where $\Cl(\kk[P])$ denote the divisor class group of the toric ring $\kk[P]$ of an integral polytope $P$.
We prove that $\Cl(\kk[P])$ is torsionfree if $P$ is compressed, and  $\Cl(\kk[P])$ is torsionfree if $P$ is a $(0,1)$-polytope which has at most $\dim P+2$ facets. 
Moreover, we characterize the toric rings of $(0,1)$-polytopes in the case $\Cl(\kk[P])\cong \ZZ$.
\end{abstract}


\section{Introduction}

\subsection{Backgrounds} In the present paper, we study divisor class groups of toric rings of integral polytopes. 
Our goal is to give some sufficient conditions for them to be torsionfree.

Toric rings of integral polytopes are regarded as affine semigroup rings, and are of particular interest in the area of combinatorial commutative algebra.
Divisor class groups of Krull semigroup rings (including normal affine semigroup rings) were studied by Chouinard in \cite{Cho}. 
Especially, divisor class groups of Ehrhart rings of rational polytopes, which are normal affine semigroup rings, were also investigated by Hashimoto-Hibi-Noma \cite{HHN}. 
Ranks of  divisor class groups of Ehrhart rings are characterized in terms of polytopes associated with them, and a sufficient condition for them to be torsionfree is given.

Toric rings arise from various combinatorial objects.
In particular, the following three toric rings have been well studied:

\noindent $\bullet$ Hibi rings, which are toric rings arising from order polytopes (see, e.g., \cite{H87, S86});

\noindent $\bullet$ stable set rings, which are toric rings arising from stable set polytopes (see, e.g., \cite{Ch, HS});

\noindent $\bullet$ edge rings, which are toric rings arising from edge polytopes (see, e.g., \cite{HHO, OH98, SVV, Villa}).
Those polytopes are $(0,1)$-polytopes and the divisor class groups of their toric rings are torsionfree (\cite{HHN,HM3}). 
On the other hand, in general, divisor class groups of toric rings of integral polytopes are not necessarily torsionfree.
For example, the divisor class group of the $d$-th Veronese subring of the polynomial ring in $n$ variables over a field has torsion if $d,n\ge 2$.
It is isomorphic to the toic ring of an integral polytope.

We are also interested in the relationships among those toric rings in the case where their divisor class groups have small rank. 
Let ${\bf Order}_n$, ${\bf Stab}_n$ and ${\bf Edge}_n$ denote 
the sets of isomorphic classes of Hibi rings, stable set rings of perfect graphs and edge rings of graphs satisfying the odd cycle condition whose divisor class groups have rank $n$, respectively. Then it is shown in \cite{HM3} that 
\begin{itemize}
\item ${\bf Order}_1={\bf Stab}_1={\bf Edge}_1$. Furthermore, ${\bf Order}_1(={\bf Stab}_1={\bf Edge}_1)$ is equal to the set of the Segre products of two polynomial rings and their polynomial extensions;
\item ${\bf Stab}_2 \cup {\bf Edge}_2={\bf Order}_2$ and no inclusion between ${\bf Stab}_2$ and ${\bf Edge}_2$;
\item there is no inclusion among ${\bf Order}_3$, ${\bf Stab}_3$ and ${\bf Edge}_3$.
\end{itemize}

From the above, the following questions naturally arise:

\begin{q}\label{q1}
When are divisor class groups of toric rings of normal integral polytopes torsionfree?
\end{q}
\begin{q}\label{q2}
Are divisor class groups of toric rings of normal $(0,1)$-polytopes always torsionfree? Moreover, is it possible to characterize the toric rings of  normal $(0,1)$-polytopes in the case where their divisor class groups have small rank?
\end{q}

\medskip
\subsection{Results}
We give partial answers to the above two questions, as Main Result 1 and Main Result 2, respectively.
Let $P\subset \RR^d$ be an integral polytope and let $\kk[P]$ and $\Cl(\kk[P])$ denote the toric ring of $P$ over a field $\kk$ and the divisor class group of $\kk[P]$, respectively.

\medskip

\noindent {\bf Main Result 1.} 
We give a sufficient condition for $\Cl(\kk[P])$ to be torsionfree in terms of $P$ (Theorem~\ref{main1}).
Moreover, by using the consequence, we prove that $\Cl(\kk[P])$ is torsionfree if $P$ is compressed (Corollary~\ref{cor1}).

\medskip

\noindent {\bf Main Result 2.} 
We completely answer Question 2 in the case $\Cl(\kk[P])\cong\ZZ$.
Suppose that $P$ is a $(0,1)$-polytope. Then the following conditions are equivalent:
\begin{itemize}
\item[(i)] $P$ has $\dim P+2$ facets;
\item[(ii)] $\kk[P]$ is isomorphic to the Segre product of two polynomial rings over $\kk$ or its polynomial extension;
\item[(iii)] $P$ is normal and $\Cl(\kk[P])\cong\ZZ$.
\end{itemize}
In particular, $P$ is normal and $\Cl(\kk[P])$ is torsionfree if $P$ has at most $\dim P +2$ facets (Theorem~\ref{main2}).

Furthermore, we give a comment about the case $\Cl(\kk[P])\cong\ZZ^2$

\medskip

In Section 2, we recall the definitions and notation of polytopes and toric rings of integral polytopes.
We also give an algorithm to compute  divisor class groups of toric rings.
In Section 3, we provide Main Result 1 and Main Result 2.

\bigskip


\section{Preliminaries}
The goal of this section is to prepare the required materials for the discussions of our main results.
Throughout this paper, let $\kk$ be a field.

\subsection{Integral polytopes and their toric rings}

In this subsection, we recall the definitions and notation of polytopes and toric rings. We refer the readers to e.g., \cite{BG2} or \cite{Villa}, for the introduction.

\smallskip

We introduce certain polytopes:
\begin{itemize}
\item An \textit{integral polytope} (resp. \textit{rational polytope}) $P \subset \RR^d$ is a polytope whose vertices sit in $\ZZ^d$ (resp. $\QQ^d$). In particular, we call $P$ the \textit{$(0,1)$-polytope} if its all vertices are $(0,1)$-vectors. 
\item A \textit{pyramid} $P \subset \RR^d$ is the convex hull of the union of a polytope $Q \subset \RR^d$ (\textit{basis} of $P$) and a point $v_0 \in \RR^d$ (\textit{apex} of $P$), where $v_0$ does not belong to $\aff(Q)$. Note that the basis of an integral pyramid $P$ is a facet of $P$. 
\item A polytope $P\subset \RR^d$ is \textit{simple} if each vertex of $P$ is contained in precisely $\dim P$ facets.
\end{itemize}

We also define the \textit{product} of two (or more) polytopes:
for this we consider polytopes $P\subset \RR^d$ and $Q\subset \RR^e$, and set
$$P\times Q=\{(x,y) : x\in P,y\in Q\}\subset \RR^{d+e}.$$
We can see that $P\times Q$ is a polytope of dimension $\dim(P) + \dim(Q)$, whose nonempty faces
are the products of nonempty faces (including itself) of $P$ and $Q$. In particular, the number of facets of $P\times Q$ is equal to $|\Psi(P)|+|\Psi(Q)|$, where $\Psi(P)$ denotes the set of facets of $P$.

\smallskip

Simple $(0,1)$-polytopes have trivial structure. It plays an important role in our main results.

\begin{lem}[{\cite[Theorem 1]{KW}}]\label{Simple}
A $(0,1)$-polytope $P \subset \RR^d$ is simple if and only if it is equal to a product of $(0,1)$-simplices.
\end{lem}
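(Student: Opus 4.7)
The ``if'' direction is immediate: every simplex is simple, and the product of simple polytopes is simple because the facets of $P\times Q$ are precisely $F\times Q$ and $P\times G$ with $F,G$ ranging over facets of $P,Q$, so each vertex $(v,w)$ lies in exactly $\dim P+\dim Q$ facets.

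For the converse, let $P\subset\RR^d$ be a simple $(0,1)$-polytope and write $n=\dim P$. The class of simple $(0,1)$-polytopes is preserved under the coordinate reflections $x_i\mapsto 1-x_i$, so I may assume $\mathbf 0\in P$ is a vertex. Simplicity then yields exactly $n$ vertices $v_1,\ldots,v_n$ adjacent to $\mathbf 0$, and each has the form $v_i=\chi_{S_i}$ for some nonempty $S_i\subseteq[d]$. The central ingredient is the classification \emph{every $2$-face of a $(0,1)$-polytope is either a triangle or a parallelogram}. To prove this I would look, for each coordinate $k$, at the cyclic sequence of $k$-th coordinate values on the vertices $u_1,\ldots,u_r$ of a $(0,1)$-polygon: since this sequence comes from restricting the linear functional $x_k$ to a convex polygon, its nonconstant form must be a single block of $0$s followed by a single block of $1$s (the pattern ``$1010$'' would force a level set of $x_k$ to cross the boundary four times, impossible on a convex polygon). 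A short additional argument then rules out $(0,1)$-polygons with five or more vertices and forces a $(0,1)$-quadrilateral to be a parallelogram.

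With the $2$-face classification, let $F_{ij}$ be the unique $2$-face of $P$ containing $\mathbf 0,v_i,v_j$ (well-defined by simplicity). If $F_{ij}$ is a parallelogram, its fourth vertex is $v_i+v_j$, which is a $(0,1)$-vector only when $S_i\cap S_j=\varnothing$. Declare $i\sim j$ iff $F_{ij}$ is a triangle. The main obstacle I anticipate is the transitivity of $\sim$, which I would handle by induction on $n$: the $3$-face $G$ spanned by three concurrent edges $[\mathbf 0,v_i],[\mathbf 0,v_j],[\mathbf 0,v_k]$ (existing by simplicity) is a simple $(0,1)$-polytope of dimension $3$, and a direct case check through the short list of such polytopes (the tetrahedron, the triangular prism, and the $3$-cube, obtained from the inductive hypothesis when $n>3$ and from a direct analysis in the base case $n=3$) shows that triangular $F_{ij}$ and $F_{jk}$ force $F_{ik}$ to be triangular.

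Once $\sim$ is an equivalence relation with classes $B_1,\ldots,B_m\subseteq\{1,\ldots,n\}$, the parallelogram/disjoint-support observation gives that the sets $T_\ell:=\bigcup_{j\in B_\ell}S_j$ are pairwise disjoint in $[d]$, and each $\Delta_\ell:=\operatorname{conv}\bigl(\mathbf 0,\{v_j:j\in B_\ell\}\bigr)$ is a $(0,1)$-simplex of dimension $|B_\ell|$ lying in the coordinate subspace indexed by $T_\ell$. To finish, one verifies that $P=\Delta_1\times\cdots\times\Delta_m$, which I would do by induction on graph distance from $\mathbf 0$ in the $1$-skeleton: the parallelogram structure shows that an edge at a vertex $u$ which ``lives in block $\ell$'' only modifies coordinates in $T_\ell$, so every vertex of $P$ is a coordinatewise sum of vertices drawn from the $\Delta_\ell$, and the dimension count $\sum_\ell|B_\ell|=n=\dim P$ identifies $P$ with the claimed product.
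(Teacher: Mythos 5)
The paper itself offers no proof of this lemma: it is imported verbatim from Kaibel--Wolff \cite{KW}, so there is nothing in the source to compare against line by line. Your plan is essentially a reconstruction of the Kaibel--Wolff argument (normalize so that $\mathbf 0$ is a vertex, classify $2$-faces of $(0,1)$-polytopes as triangles or parallelograms, note that a parallelogram $2$-face at $\mathbf 0$ forces $S_i\cap S_j=\emptyset$, partition the edges at $\mathbf 0$ into blocks, and exhibit $P$ as the product of the resulting simplices). The ``if'' direction and the $2$-face classification are correct as sketched, and every intermediate claim you state is true.

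Two steps are not actually carried out, and the second would fail as literally written. First, your transitivity argument for $\sim$ is an induction on $\dim P$ whose base case $n=3$ is circular: there the $3$-face $G$ is $P$ itself, and you only promise ``a direct analysis.'' This is repairable without any induction: if $F_{ij},F_{jk}$ are triangles and $F_{ik}$ is a parallelogram with fourth vertex $v_i+v_k$, then inside $G$ the vertex $v_j$ has neighbours exactly $\mathbf 0,v_i,v_k$; since an edge of a $3$-polytope lies in exactly two $2$-faces, the third $2$-face at $v_j$ coincides with the third $2$-faces at $v_i$ and $v_k$, hence contains $v_i,v_j,v_k,v_i+v_k$, and is therefore a parallelogram whose fourth vertex is simultaneously $v_i+v_k$ and $v_i+v_k-v_j$, forcing $v_j=\mathbf 0$, a contradiction. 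Second, and more seriously, the final sentence deduces $P=\Delta_1\times\cdots\times\Delta_m$ from the inclusion $\operatorname{vert}(P)\subseteq\operatorname{vert}\bigl(\Delta_1\times\cdots\times\Delta_m\bigr)$ ``and the dimension count.'' Containment of two $n$-dimensional polytopes does not imply equality, so the dimension count proves nothing here; you must establish the reverse inclusion, i.e.\ that every sum $\sum_\ell w_\ell$ with $w_\ell\in\operatorname{vert}(\Delta_\ell)$ really occurs as a vertex of $P$. The natural fix is to strengthen your $1$-skeleton walk: show that the block decomposition computed at an arbitrary vertex $u$ of $P$ agrees with the one computed at $\mathbf 0$, so that at every vertex all $n$ ``change one component'' moves are realized by edges of $P$; then $\operatorname{vert}(P)$ is closed under these moves and the reverse inclusion follows. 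As written, this closure property is asserted only in the forward direction, so the proof is incomplete at its last step.
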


\smallskip
We define compressed polytopes introduced by Stanley in \cite{S80}.

\begin{defi}
An integral polytope $P$ is \textit{compressed} if every pulling triangulation of $P$ using the integral points in $P$ is unimodular.
\end{defi}

A characterization of the compressed integral polytopes is known in terms of their facet defining inequalities.
Let $\langle \cdot , \cdot \rangle$ denote the natural inner product of $\RR^d$. For $a \in \RR^d$ and $b \in \RR$, 
we denote by $H^{+}(a ; b)$ (resp. $H(a ; b)$) a closed half-space $\{ u \in \RR^d : \langle u, a\rangle + b\ge 0\}$ 
(resp. an affine hyperplane $\{ u \in \RR^d : \langle u , a \rangle +b= 0\}$). 
In particular, we denote the linear hyperplane $H(a;0)$ by $H_a$.

For each facet $F$ of an integral polytope $P \subset \RR^d$, there exist a vector $a_F \in \QQ^d$ and a rational number $b_F$ 
with the following conditions:
\begin{itemize}
\item[(i)] $H(a_F ; b_F)$ is a support hyperplane associated with $F$ and $P \subset H^{+}(a_F;b_F)$; 
\item[(ii)] $d_F(v) \in \ZZ$ for any $v \in P \cap \ZZ^d$; 
\item[(iii)] $\sum_{v\in P\cap \ZZ^d}d_F(v)\ZZ = \ZZ$,
\end{itemize}
where $d_F(v)=\langle v, a_F \rangle + b_F$ for $v \in P\cap\ZZ^d$. We can see that $d_F(v)$ is independent of the choice of $a_F$ and $b_F$.

\begin{lem}[{\cite[Theorem 2.4]{Su}}]\label{compressed}
Let $P$ be an integral polytope. Then the following conditions are equivalent:
\begin{itemize} 
\item[(i)] $P$ is compressed.
\item[(ii)] For any $F \in \Psi(P)$, $|\{d_F(v) : v \in P\cap \ZZ^d, d_F(v) \neq 0\}|=1$.
\end{itemize}
\end{lem}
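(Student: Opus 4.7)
The plan rests on a single volume identity for pulling constructions: if $\sigma=\conv\{w_0,\ldots,w_{d-1}\}$ is a lattice $(d-1)$-simplex lying in a facet $F$ of $P$ and $v\in P\cap\ZZ^d$ with $v\notin F$, then
\[
  \operatorname{normVol}\bigl(\conv\{v,w_0,\ldots,w_{d-1}\}\bigr)=d_F(v)\cdot\operatorname{normVol}_F(\sigma),
\]
where $\operatorname{normVol}$ denotes normalized volume with respect to $\ZZ^d$ and $\operatorname{normVol}_F$ that with respect to the affine lattice of $F$. Before running any induction I would use the normalization (iii) to rephrase (ii) in the equivalent form ``$d_F(v)\in\{0,1\}$ for every facet $F$ and every $v\in P\cap\ZZ^d$'', since the gcd condition (iii) forces the unique nonzero value permitted by (ii) to be $1$.

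For $(ii)\Rightarrow(i)$ I would proceed by induction on $d=\dim P$. The crucial preliminary is that the reformulated (ii) descends to every facet $F$ of $P$: any facet $G$ of $F$ equals $F\cap F'$ for some facet $F'\neq F$ of $P$, and $d_{F'}|_F$ remains $\{0,1\}$-valued and is nonzero on at least one lattice point of $F$, so its gcd is $1$ and it serves as the normalized facet function of $G$ inside $F$. Given any ordering $v_1,\ldots,v_m$ of $P\cap\ZZ^d$, the associated pulling triangulation decomposes as the set of simplices $v_1*\sigma$ where $\sigma$ runs over the simplices of the pulling triangulation of $F$ (with the induced ordering) as $F$ ranges over the facets of $P$ not containing $v_1$. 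By induction each such $\sigma$ is unimodular in $F$, and since $d_F(v_1)=1$, the volume identity shows that $v_1*\sigma$ is unimodular in $P$.

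For $(i)\Rightarrow(ii)$ I would argue by contraposition. Suppose some facet $F$ admits a lattice point $v^\ast\in P\setminus F$ with $d_F(v^\ast)\ge 2$. Order $P\cap\ZZ^d$ so that $v^\ast$ comes first and form the resulting pulling triangulation. It necessarily contains a maximal simplex of the form $v^\ast*\sigma$, where $\sigma$ is a maximal simplex of a pulling triangulation of $F$; its normalized volume is $d_F(v^\ast)\cdot\operatorname{normVol}_F(\sigma)\ge 2$. Hence the pulling triangulation is not unimodular and $P$ is not compressed.

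The main obstacle is not the volume identity itself, which is routine, but the two structural statements underlying the induction: the recursive description of pulling triangulations as cones from $v_1$ over the pulling triangulations of the facets missing $v_1$, and the parallel verification that the $\{0,1\}$-valued form of condition (ii) is inherited by those facets. Once both are in place, each implication collapses to the one-line volume computation above.
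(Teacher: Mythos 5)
The paper gives no proof of this lemma at all --- it is quoted from Sullivant --- so there is nothing internal to compare against. Your route (the recursive description of a pulling triangulation as the joins of the first-pulled point with pulling triangulations of the facets avoiding it, combined with a base-times-height factorization of normalized volume) is the standard one and is essentially how the cited result is proved. The contrapositive argument for (i)$\Rightarrow$(ii) is sound as written: whatever the true lattice height of $v^{\ast}$ over $\aff(F)$ is, it is a positive integer multiple of $d_{F}(v^{\ast})$, so the simplex $v^{\ast}\ast\sigma$ has normalized volume at least $d_{F}(v^{\ast})\ge 2$. The descent of the $\{0,1\}$-condition to facets via $G=F\cap F'$ is also correct.

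The genuine gap is in the volume identity as you state it, and it affects only (ii)$\Rightarrow$(i). You pair $\operatorname{normVol}$ taken with respect to $\ZZ^{d}$ with the paper's $d_{F}$, which condition (iii) normalizes so that $\gcd\{d_{F}(v):v\in P\cap\ZZ^{d}\}=1$. But the factor in the base-times-height formula for the $\ZZ^{d}$-normalized volume is the value at $v$ of the integral affine form vanishing on $\aff(F)$ that is primitive on $\ZZ^{d}$; this coincides with $d_{F}(v)$ only when that primitive form already has gcd $1$ on $P\cap\ZZ^{d}$. They can differ: for $P=\conv\{0,\,e_{1},\,e_{2},\,e_{1}+e_{2}+2e_{3}\}\subset\RR^{3}$, whose only lattice points are its four vertices, every facet satisfies $d_{F}(v)\in\{0,1\}$ in the sense of (ii)--(iii), yet the simplex has $\ZZ^{3}$-normalized volume $2$, so your identity --- and with it your unimodularity conclusion --- fails there. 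Whether this $P$ refutes the lemma itself or only your identity depends on which lattice ``unimodular'' refers to; the paper never says, though the squarefree-initial-ideal characterization it quotes corresponds to normalizing by $\ZZ\calA(P)$, under which this $P$ is harmless. Either way, to close the proof you must fix the reference lattice and make the height in the volume formula match it: either add (and propagate through the induction, since it does not automatically descend to facets) the hypothesis that the lattice points of the relevant face affinely span the ambient lattice of its affine hull, or run the whole induction relative to $\ZZ\calA(\cdot)$ and track how that lattice restricts to facets. Note that the paper only ever uses the unproblematic direction (i)$\Rightarrow$(ii), in Corollary~\ref{cor1}.
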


Compressed polytopes appear in several places. We give some examples as follow:

\begin{itemize}
\item Order polytopes of partially ordered sets are compressed. It follows from their facet defining inequalities, which are studied in \cite{S86}.
\item For a simple graph $G$, the stable set polytope of G is compressed if and only if $G$ is perfect (\cite{GPT,OH01}).
\item Edge polytopes of bipartite graphs and complete multipartite graphs are compressed. It follows from their facet defining inequalities, which are studied in \cite{OH98}.
\end{itemize}

\medskip

Next, we introduce toric rings of integral polytopes. For an integral polytope $P \subset \RR^d$, we define $\phi_P$ as the morphism of $\kk$-algebras:
$$\phi_P : \kk[x_v : v\in P\cap \ZZ^d] \to \kk[t_0,t_1^{\pm 1},\ldots,t_d^{\pm 1}], \text{ induced by } \phi_P(x_v)=\tb^{v}t_0,$$

\noindent where $\tb^{v}=t_1^{v_1}\cdots t_d^{v_d}$ for $v=(v_1,\ldots,v_d) \in \ZZ^d$. 
The kernel of $\phi_P$, denoted by $I_P$, is called the \textit{toric ideal} of $P$. 
Moreover, the image of $\phi_P$, denoted by $\kk[P]$, is called  the \textit{toric ring} of $P$. 
We have $\kk[P]\cong \kk[x_v : v\in P\cap\ZZ^d]/I_P$.

The toric ideal $I_P$ is generated by homogeneous binomials. 
It is known that $P$ is compressed if and only if the initial ideal of $I_P$ with respect to any revese lexicographic monomial order on $\kk[x_v : v\in P\cap\ZZ^d]$ is generated by squarefree monomials.

The toric ring $\kk[P]$ is standard graded $\kk$-subalgebra of $\kk[t_0,t_1^{\pm 1},\ldots,t_d^{\pm 1}]$ by setting $\deg(\tb^v t_0)=1$ for each $v \in P \cap \ZZ^d$. 
The Krull dimension of $\kk[P]$, denoted by $\dim \kk[P]$, is equal to the dimension of $P$ plus 1. Namely, $\dim \kk[P]=\dim P+1$. 
We can see that the toric ring of an integral polytope $P\subset \RR^d$ is isomorphic to the affine semigroup ring of $\ZZ_{\ge 0}\calA(P)$ with coefficients in $\kk$, where $\calA(P)=\{(v,1) \in \ZZ^{d+1} : v\in P\cap \ZZ^d \}$ and $\ZZ_{\ge 0}\calA(P)=\{a_1x_1+\cdots +a_nx_n : x_1,\ldots,x_n\in \calA(P), a_1,\ldots,a_n \in \ZZ_{\ge 0} \}$.
We call $\ZZ_{\ge 0}\calA(P)$ the \textit{polytopal affine semigroup} of $P$. Note that polytopal affine semigroups are positive.

\smallskip
Let $R=\bigoplus_{n\ge 0}R_n$, $S=\bigoplus_{n\ge 0}S_n$ be two standard algebras over $\kk$ and define their \textit{Segre product} $R\# S$ as the graded algebra:
$$R\#S=(R_0\otimes_{\kk}S_0)\oplus(R_1\otimes_{\kk}S_1)\oplus\cdots \subset R\otimes_{\kk}S.$$
Let $P_1$ and $P_2$ be two integral polytopes. Then $\kk[P_1\times P_2]$ is isomorphic to the Segre product of $\kk[P_1]$ and $\kk[P_2]$.

\medskip

\subsection{Divisor class groups of toric rings}
In this subsection, we give an algorithm to compute divisor class groups of toric rings of normal integral polytopes.
We use theories in \cite[Section 9.8]{Villa}.

\begin{prop}[cf. {\cite[Proposition 9.8.13]{Villa}}]\label{thm:equi_nomal}
For an integral polytope $P$, the following conditions are equivalent:
\begin{align*}
(\text{{\upshape i}})\; \kk[P] \text{ is normal. } \quad \quad
(\text{{\upshape ii}})\; \ZZ_{\ge 0}\calA(P)=\RR_{\ge 0}\calA(P) \cap \ZZ \calA(P).
\end{align*}
\end{prop}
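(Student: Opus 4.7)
The plan is to invoke the standard characterization of normality for affine semigroup rings and apply it directly to $S := \ZZ_{\ge 0}\calA(P)$. Since $\kk[P]$ is by construction the affine semigroup ring $\kk[S]$ of the positive affine semigroup $S \subset \ZZ^{d+1}$, the statement reduces to: a positive affine semigroup ring is normal if and only if the semigroup itself is saturated in its group completion, i.e.\ equals $\RR_{\ge 0}S \cap \ZZ S$.

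First I would recall the identification of the integral closure. Working inside $\kk[t_0, t_1^{\pm 1},\ldots, t_d^{\pm 1}]$, the field of fractions of $\kk[P]$ is the quotient field of $\kk[\ZZ\calA(P)]$, so the integral closure $\overline{\kk[P]}$ sits inside $\kk[\ZZ\calA(P)]$. The classical lemma (proved, for instance, as in \cite[Section 9.8]{Villa}) states that an element $\tb^{w}t_0^{m}$ with $(w,m)\in\ZZ\calA(P)$ is integral over $\kk[P]$ exactly when some positive multiple of $(w,m)$ lies in $\ZZ_{\ge 0}\calA(P)$, equivalently when $(w,m) \in \RR_{\ge 0}\calA(P)$. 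Hence
\begin{equation*}
\overline{\kk[P]} \;=\; \kk\bigl[\, \RR_{\ge 0}\calA(P) \cap \ZZ\calA(P)\,\bigr].
\end{equation*}

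Next, since $\kk[P] = \kk[\ZZ_{\ge 0}\calA(P)]$ and the inclusion $\kk[P] \subset \overline{\kk[P]}$ corresponds monomially to the inclusion $\ZZ_{\ge 0}\calA(P) \subset \RR_{\ge 0}\calA(P) \cap \ZZ\calA(P)$, the equality of rings is equivalent to the equality of semigroups. This gives (i)$\Leftrightarrow$(ii). The direction (ii)$\Rightarrow$(i) is then immediate, while (i)$\Rightarrow$(ii) follows because any lattice point in $\RR_{\ge 0}\calA(P) \cap \ZZ\calA(P)$ produces a monomial integral over $\kk[P]$, which must already lie in $\kk[P]$ by normality.

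The main point that needs care is the identification of the integral closure, specifically verifying that every element of $\kk[\RR_{\ge 0}\calA(P) \cap \ZZ\calA(P)]$ is integral over $\kk[P]$ and conversely; this relies on the fact that $\RR_{\ge 0}\calA(P)$ is a finitely generated rational cone, so for any $(w,m)$ in the cone one finds a positive integer $N$ with $N(w,m) \in \ZZ_{\ge 0}\calA(P)$, giving the monic relation $\bigl(\tb^{w}t_0^{m}\bigr)^{N} \in \kk[P]$. Since this is exactly \cite[Proposition 9.8.13]{Villa}, in the write-up I would simply cite the reference and record the semigroup-level reformulation needed for the subsequent algorithm.
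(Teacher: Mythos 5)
Your argument is correct and is essentially the same as what the paper does: the paper states this proposition without proof, citing \cite[Proposition 9.8.13]{Villa}, and your reduction to the standard fact that the integral closure of $\kk[\ZZ_{\ge 0}\calA(P)]$ inside $\kk[\ZZ\calA(P)]$ is $\kk[\RR_{\ge 0}\calA(P)\cap\ZZ\calA(P)]$ is exactly the content of that reference. The one step you rightly flag as needing care (that the integral closure is again a monomial subring, so ring equality is equivalent to semigroup equality) is handled by the graded/monomial structure and is covered by the citation, so nothing is missing.
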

We say that an integral polytope $P$ is \textit{normal} if (ii) in Theorem~\ref{thm:equi_nomal} holds.
It is known that if $P$ possesses a unimodular triangulation, then $P$ is normal (cf. {\cite[Corollary 4.12]{HHO}}). In particular, $P$ is normal if $P$ is compressed.

\smallskip

For an integral polytope $P\subset \RR^d$, there exists an irreducible representation: 
$$\RR_{\ge 0}\calA(P)=\aff(\calA(P))\cap\left(\bigcap_{F\in \Psi(P)}H^+_{(a_F,b_F)}\right).$$
Since $\langle (v,1),(a_F,b_F)\rangle=\langle v,a_F\rangle+b_F=d_F(v)$ for any $v\in P\cap\ZZ^d$ and $F\in \Psi(P)$, the following conditions are satisfied:
\begin{itemize}
\item[(a)] $\langle (v,1),(a_F,b_F)\rangle \in \ZZ \text{ for any } v\in P\cap\ZZ^d \text{ and } F\in \Psi(P);$ \vspace{0.2cm}
\item[(b)] $\ZZ=\sum_{v\in P\cap\ZZ^d}\langle (v,1),(a_F,b_F)\rangle \ZZ \text{ for all }F\in \Psi(P).$ 
\end{itemize}
Given $v \in P \cap \ZZ^d$, we define ${\bf w}_v$ belonging to a free abelian group $\bigoplus_{F \in \Psi(P)} \ZZ \eb_F$ 
with its basis $\{\eb_F\}_{F \in \Psi(P)}$ as follows: 
$${\bf w}_v=\sum_{F \in \Psi(P)}\langle (v,1),(a_F,b_F)\rangle \eb_F=\sum_{F \in \Psi(P)}d_F(v) \eb_F.$$
Let $\calS= \sum_{v \in P \cap \ZZ^d} \ZZ_{\ge 0} {\bf w}_v$. Since $\ZZ_{\ge 0}\calA(P)$ is positive, we have $\calS \cong \ZZ_{\ge 0}\calA(P)$, and hence $\rank \ZZ\calS=\dim P+1$.
Furthermore, let $\calM_P$ be the matrix whose column vectors consist of ${\bf w}_v$ for $v \in P \cap \ZZ^d$, that is, $\calM_P=\left(d_F(v)\right)_{F \in \Psi(P), v \in P\cap \ZZ^d}$. 
 
\begin{thm}[cf. {\cite[Theorem 9.8.19]{Villa}}]\label{thm:class} Work with the same notation as above and suppose that $P$ is nomal. Then 
$$\Cl(\kk[P]) \cong \bigoplus_{F \in \Psi(P)} \ZZ \eb_F \big/ \ZZ \calS.$$ 
In particular, we have 
$$\Cl(\kk[P]) \cong \ZZ^t \oplus \ZZ/s_1\ZZ \oplus \cdots \oplus \ZZ/s_r\ZZ,$$ 
where $r=\rank \calM_P=\dim P +1$, $t=|\Psi(P)|-r$ and $s_1,\ldots,s_r$ are positive integers appearing in the diagonal of the Smith normal form of $\calM_P$.
\end{thm}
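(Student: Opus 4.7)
The plan is to derive Theorem~\ref{thm:class} from the general description of the divisor class group of a normal affine semigroup ring (as in \cite[Section~9.8]{Villa} or more generally Chouinard's \cite{Cho}), and then to repackage it via Smith normal form.

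First I would identify $\kk[P]$ with the affine semigroup ring $\kk[\calS]$ via the isomorphism $\calS\cong\ZZ_{\ge 0}\calA(P)$ already noted. Since $P$ is normal, Proposition~\ref{thm:equi_nomal} ensures $\kk[P]$ is a normal Krull domain, so $\Cl(\kk[P])$ is well-defined. For such a normal toric ring, the class group is generated by the classes of torus-invariant prime Weil divisors, which are in bijection with the facets of the defining cone $\RR_{\ge 0}\calA(P)$, equivalently with the facets $F\in\Psi(P)$. This identifies the group of torus-invariant Weil divisors with $\bigoplus_{F\in\Psi(P)}\ZZ\eb_F$.

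Next I would show that torus-invariant principal divisors form the subgroup $\ZZ\calS$. The valuation of the monomial $\tb^v t_0$ along the prime divisor attached to a facet $F$ is exactly the facet functional evaluated on $(v,1)$, namely $\langle(v,1),(a_F,b_F)\rangle=d_F(v)$, so the principal divisor of $\tb^v t_0$ is precisely $\wb_v$. Every torus-invariant principal divisor is the divisor of a Laurent monomial whose exponent lies in $\ZZ\calA(P)$; such exponents are $\ZZ$-linear combinations of the $(v,1)$, so the corresponding principal divisors are $\ZZ$-linear combinations of the $\wb_v$ and form the subgroup $\ZZ\calS$. Invoking \cite[Theorem~9.8.19]{Villa} then yields the first stated isomorphism $\Cl(\kk[P])\cong\bigoplus_F\ZZ\eb_F/\ZZ\calS$.

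For the structural decomposition, I would view $\ZZ\calS$ as the column span of $\calM_P=(d_F(v))_{F,v}$ and apply Smith normal form to $\calM_P$. The cokernel then decomposes as $\ZZ^t\oplus\ZZ/s_1\ZZ\oplus\cdots\oplus\ZZ/s_r\ZZ$ with $r=\rank\calM_P$, $t=|\Psi(P)|-r$, and the $s_i$ the nontrivial elementary divisors. The identity $r=\dim P+1$ follows from $\rank\calM_P=\rank\ZZ\calS=\rank\ZZ\calA(P)$, together with the observation that $\calA(P)=\{(v,1):v\in P\cap\ZZ^d\}$ lies on the hyperplane $\{x_{d+1}=1\}$ and that the differences $(v-v_0,0)$ between integer points of $P$ and a fixed vertex $v_0$ span a sublattice of rank $\dim P$. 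The main obstacle is only the toric input above: identifying torus-invariant height-one primes with facets and reading off their valuations from facet functionals. Both facts are standard and cited from \cite{Villa}, so the remaining work is the Smith normal form packaging and the rank count.
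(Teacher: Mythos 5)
The paper does not actually prove this statement---it is imported verbatim from \cite[Theorem 9.8.19]{Villa} (together with the Smith normal form reformulation), so there is no in-paper argument to compare against. Your sketch is a correct reconstruction of the standard derivation: torus-invariant prime divisors correspond to facets, the divisor of $\tb^v t_0$ is $\wb_v$, and the class group is the cokernel of $\calM_P$. The one point you use silently is that reading the valuation along the divisor of $F$ as $d_F(v)$ requires the facet functional to be primitive on the lattice $\ZZ\calA(P)$, which is exactly the paper's normalization condition $\sum_{v}d_F(v)\ZZ=\ZZ$; it is worth saying so explicitly, since without it the $s_i$ would be computed from the wrong matrix.
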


The positive integers $s_1,\ldots,s_r$ are called the \textit{invariant factors} of $\calM_P$. It is known that $s_i=g_i(\calM_P)/g_{i-1}(\calM_P)$ where $g_i(\calM_P)$ denotes the greatest common divisor of all $i\times i$ minors of $\calM_P$ and $g_0(\calM_P)=1$ (see, e.g., \cite{New}).

\bigskip

\section{Main results}

We now give partial answers for Question~\ref{q1} and Question~\ref{q2}, as Main Result 1 and Main Result 2, respectively.

\subsection{Main Result 1}

Let $P \subset \RR^d$ be an integral polytope and let $k_P$ be a maximal nonnegative integer satisfying the following statement:
\begin{itemize}
\item[($*$)] There exist distinct integral points $v_1,\ldots,v_{k_P} \in P\cap \ZZ^d$ and distinct facets $F_1,\ldots,F_{k_P}$ of $P$ such that 
$v_i \in \bigcap_{l=1}^{i-1} F_l$ for each $1< i \le k_P$ and $d_{F_i}(v_i)=1$ for each $1 \le i \le k_P$. \\
\end{itemize}

\begin{ex}
(a) Let $P_1=\conv(\{(0,0),(1,0),(0,1),(2,1),(1,2),(2,2)\})$. See Figure~\ref{ex1}.
We can see that $k_{P_1}=3$. Indeed, let $v_1=(1,1)$, $v_2=(1,0)$, $v_3=(0,0)$, $F_1=\conv(\{(0,0),(1,0)\})$, $F_2=\conv(\{(0,0),(0,1)\})$ and $F_3=\conv(\{(0,1),(1,2)\})$. Then we can check easily that integral points $v_1,v_2,v_3 \in P_1$ and facets $F_1,F_2,F_3 \in \Psi(P_1)$ satisfy the statement ($*$).

(b) Let $P_2=\conv(\{(1,0),(0,1),(2,1),(1,2)\})$. See Figure~\ref{ex2}. Then we have $d_F(v_1)=1$ for every facet $F$ of $P_2$ and the integral point $v_1=(1,1) \in P_2$. However, for any integral points $v$ in $P_2$ except $v_1$, the equation $d_F(v)=1$ does not hold. Thus we can obtain that $k_{P_2}=1$.     
\end{ex}

\begin{figure}[h]
\begin{minipage}{0.49\columnwidth}
\centering
\begin{tikzpicture}[domain=0:4,samples=200,>=stealth,line width=1.6pt]
\filldraw [fill=gray,draw=gray,opacity=0.4] (0,0) -- (0,1.2) -- (1.2,2.4) -- (2.4,2.4) -- (2.4,1.2) -- (1.2,0);
\draw[->] (-0.5,0) -- (3.0,0);  \draw[->] (0,-0.5) -- (0,3.0); 
\draw (0,0)--(0,1.2); \draw (0,1.2)--(1.2,2.4);\draw (1.2,2.4)--(2.4,2.4);\draw (2.4,2.4)--(2.4,1.2);\draw (2.4,1.2)--(1.2,0);\draw (1.2,0)--(0,0);
\draw (0,0) node[below left] {O};
\draw[step=1.2,black,very thin] (-0.5,-0.5) grid (2.8,2.8);
\fill[black] (0,0) circle (2.5pt) (0,1.2) circle (2.5pt)  (1.2,2.4) circle (2.5pt)  (2.4,2.4) circle (2.5pt)  (2.4,1.2) circle (2.5pt) (1.2,0) circle (2.5pt) (1.2,1.2) circle (2.5pt);
\end{tikzpicture}
\caption{The polytope $P_1$}
\label{ex1}
\end{minipage}
\begin{minipage}{0.49\columnwidth}
\centering
\begin{tikzpicture}[domain=0:4,samples=200,>=stealth,line width=1.6pt]
\filldraw [fill=gray,draw=gray,opacity=0.4] (1.2,0) -- (0,1.2) -- (1.2,2.4) -- (2.4,1.2);
\draw[->] (-0.5,0) -- (3.0,0);  \draw[->] (0,-0.5) -- (0,3.0); 
\draw (1.2,0) -- (0,1.2); \draw (0,1.2) -- (1.2,2.4); \draw (1.2,2.4) -- (2.4,1.2); \draw (2.4,1.2) -- (1.2,0);
\draw (0,0) node[below left] {O}; 
\draw[step=1.2,black,very thin] (-0.5,-0.5) grid (2.8,2.8);
\fill[black] (0,1.2) circle (2.5pt)  (1.2,2.4) circle (2.5pt) (2.4,1.2) circle (2.5pt) (1.2,0) circle (2.5pt) (1.2,1.2) circle (2.5pt);
\end{tikzpicture}
\caption{The polytope $P_2$}
\label{ex2}
\end{minipage}
\end{figure}

\begin{thm}\label{main1}
Let $P \subset \RR^d$ be a normal integral polytope and let $s_1,\ldots, s_r$ be the invariant factors of $\calM_P$.
Then $s_1=\cdots=s_{k_P}=1$. In particular, $\Cl(\kk[P])$ is torsionfree if $k_P=\dim P+1$.
\end{thm}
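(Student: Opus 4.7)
The plan is to work with the explicit description of $\Cl(\kk[P])$ provided by Theorem~\ref{thm:class}: the invariant factors satisfy $s_i=g_i(\calM_P)/g_{i-1}(\calM_P)$, where $g_i(\calM_P)$ is the gcd of all $i\times i$ minors of $\calM_P$. Since the divisibility chain $g_1\mid g_2\mid\cdots\mid g_r$ is automatic (every $(k{+}1)$-minor expands as an integer combination of $k$-minors), in order to conclude $s_1=\cdots=s_{k_P}=1$ it is enough to exhibit a single $k_P\times k_P$ submatrix of $\calM_P$ with determinant $\pm 1$.

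Given the integral points $v_1,\ldots,v_{k_P}$ and the facets $F_1,\ldots,F_{k_P}$ provided by property~($*$), I would consider the square submatrix
$$A=\bigl(d_{F_l}(v_i)\bigr)_{1\le l,i\le k_P}$$
of $\calM_P$, with rows indexed in the order $F_1,\ldots,F_{k_P}$ and columns indexed in the order $v_1,\ldots,v_{k_P}$. The key observation is that $A$ is lower triangular with $1$'s on the diagonal: if $l<i$ then $v_i\in\bigcap_{j=1}^{i-1}F_j\subset F_l$, so $d_{F_l}(v_i)=0$; and $d_{F_i}(v_i)=1$ by the defining condition of $k_P$. Consequently $\det(A)=1$, which forces $g_{k_P}(\calM_P)=1$. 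By the divisibility chain above, $g_0=g_1=\cdots=g_{k_P}=1$, and hence $s_1=\cdots=s_{k_P}=1$.

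The "In particular" clause is then immediate: by Theorem~\ref{thm:class} one has $r=\dim P+1$, so if $k_P=\dim P+1=r$ then all invariant factors are $1$ and the torsion part of $\Cl(\kk[P])$ vanishes, leaving $\Cl(\kk[P])\cong\ZZ^t$ torsionfree.

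There is essentially no serious obstacle here; the definition of $k_P$ was tailored precisely so that the selected submatrix becomes triangular, and the only thing to be careful about is the ordering of rows and columns that produces the triangular (rather than permuted) shape, which is a pure bookkeeping matter.
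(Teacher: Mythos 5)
Your proposal is correct and follows essentially the same route as the paper's own proof: select the $k_P\times k_P$ submatrix $(d_{F_l}(v_i))$, observe that property~($*$) makes it triangular with unit diagonal, conclude $g_{k_P}(\calM_P)=1$ and hence $s_1=\cdots=s_{k_P}=1$, then invoke Theorem~\ref{thm:class} for the torsionfreeness when $k_P=\dim P+1$. The only difference is that you make the divisibility chain $g_1\mid g_2\mid\cdots$ explicit where the paper leaves it implicit.
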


\begin{proof}
Assume that $v_1,\ldots,v_{k_P} \in P\cap \ZZ^d$ and $F_1,\ldots,F_{k_P} \in \Psi(P)$ satisfy the statement ($*$). Then $k_P\times k_P$ submatrix $(d_{F_i}(v_j))$ of $\calM_P$ is a triangular matrix whose diagonal entries are equal to 1 since $d_{F_1}(v_i)=\cdots=d_{F_{i-1}}(v_i)=0$ and $d_{F_i}(v_i)=1$ for each $1\le i \le k_P$. Thus we have $\det \bigl( (d_{F_i}(v_j)) \bigr)=1$, and so $g_{k_P}(\calM_P)=1$. This implies $s_1=\cdots=s_{k_P}=1$. 
Moreover, it follows directly from Theorem~\ref{thm:class} that $\Cl(\kk[P])$ is torsionfree if $k_P=\dim P +1$.
\end{proof}

This theorem give a sufficient condition for the divisor class group of toric rings to be torsionfree. 
However, it is not necessary, namely, there exists a normal integral polytope $P$ such that $\Cl(\kk[P])$ is torsionfree, but $k_P< \dim P +1$.

\begin{ex}
Let $P_3=\conv(\{(0,0),(1,4),(2,5),(3,1)\})$. See Figure~\ref{counter}. This integral polytope is normal, and we can see that $k_{P_3}=1$. However, we can compute $s_1=s_2=s_3=1$.
\end{ex}

\begin{figure}[h]
\begin{center}
\begin{tikzpicture}[domain=0:4,samples=200,>=stealth,line width=1.6pt]
\filldraw [fill=gray,draw=gray,opacity=0.4] (0,0) -- (0.75,3.0) -- (1.5,3.75) -- (2.25,0.75);
\draw[->] (-0.7,0) -- (3.45,0);  \draw[->] (0,-0.7) -- (0,4.2); 
\draw (0,0) -- (0.75,3.0); \draw (0.75,3.0) -- (1.5,3.75); \draw (1.5,3.75) -- (2.25,0.75); \draw (0,0) -- (2.25,0.75);
\draw (0,0) node[below left] {O};
\draw[step=0.75,black,very thin] (-0.7,-0.7) grid (3.35,4.1);
\fill[black] (0,0) circle (2.5pt) (0.75,3.0) circle (2.5pt)  (1.5,3.75) circle (2.5pt)  (2.25,0.75) circle (2.5pt)  (1.5,1.5) circle (2.5pt) 
(0.75,0.75) circle (2.5pt) (0.75,1.5) circle (2.5pt) (0.75,2.25) circle (2.5pt) (1.5,0.75) circle (2.5pt) (1.5,2.25) circle (2.5pt) (1.5,3.0) circle (2.5pt);
\end{tikzpicture}
\caption{The polytope $P_3$}
\label{counter}
\end{center}
\end{figure}
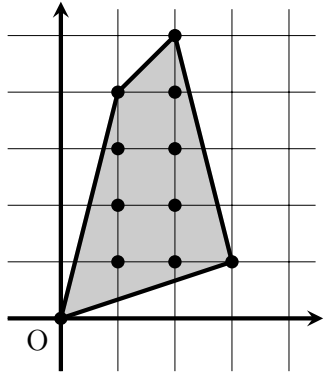

\begin{cor}\label{cor1}
If an integral polytope $P \subset \RR^d$ is compressed, then $\Cl(\kk[P])$ is torsionfree.
\end{cor}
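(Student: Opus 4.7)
The plan is to show $k_P = \dim P + 1$ for any compressed $P$ and then apply Theorem~\ref{main1}, keeping in mind that compressedness implies normality (as noted after Proposition~\ref{thm:equi_nomal}).

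First I would record the combinatorial consequence of compressedness. By Lemma~\ref{compressed}, for each facet $F$ the set $\{d_F(v) : v \in P \cap \ZZ^d,\; d_F(v) \neq 0\}$ is a singleton $\{c_F\}$, and the gcd condition (iii) together with the positivity $d_F \geq 0$ on $P$ forces $c_F = 1$. Consequently $d_F(v) \in \{0, 1\}$ for every facet $F$ and every integral point $v$, so the equation $d_{F_i}(v_i) = 1$ in ($*$) reduces to the purely combinatorial statement $v_i \notin F_i$.

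Next I would construct the required sequences via a greedy flag of faces. Put $G_0 = P$. Inductively, given a face $G_{i-1}$ of $P$ of positive dimension, invoke the standard polytope-theoretic fact that every facet of $G_{i-1}$, viewed as a polytope, has the form $G_{i-1} \cap F$ for some facet $F$ of $P$ not containing $G_{i-1}$; choose such an $F_i$ and set $G_i := G_{i-1} \cap F_i$, which has dimension $\dim G_{i-1} - 1$. Then take $v_i$ to be any vertex of $G_{i-1}$ not lying in the proper face $G_i$ (such a vertex is automatically in $P \cap \ZZ^d$ because $P$ is integral). Iterating for $i = 1, \ldots, \dim P$ yields a chain $P = G_0 \supsetneq G_1 \supsetneq \cdots \supsetneq G_{\dim P}$ whose last term is a single vertex; set $v_{\dim P + 1}$ equal to this vertex and take $F_{\dim P + 1}$ to be any facet of $P$ missing it, which exists because the intersection of all facets of $P$ is empty once $\dim P \geq 1$ (the case $\dim P = 0$ is trivial since then $\kk[P] \cong \kk$).

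Finally I would verify all four requirements of ($*$) for this data. By construction $v_i \in G_{i-1} = F_1 \cap \cdots \cap F_{i-1}$ and $v_i \notin F_i$, giving $d_{F_i}(v_i) = 1$ by the first step. For distinctness of the facets, $F_j \supseteq G_j \supseteq G_{i-1}$ whenever $j < i$ while $F_i \not\supseteq G_{i-1}$ by choice, so $F_i \neq F_j$. For distinctness of the vertices, $v_j \notin F_j$ forces $v_j \notin G_j$, and $G_{i-1} \subseteq G_j$ for $j \leq i-1$ then gives $v_j \notin G_{i-1}$, whereas $v_i \in G_{i-1}$; hence $v_i \neq v_j$. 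I expect the main (minor) obstacle to be the polytope-theoretic fact invoked in the flag step; everything else is bookkeeping, and Theorem~\ref{main1} then closes the argument.
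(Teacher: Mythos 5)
Your proof is correct and follows essentially the same route as the paper: build a complete flag of faces by intersecting with successive facets, pick an integral point of each face lying off the next facet, and use Lemma~\ref{compressed} together with the gcd condition to force $d_{F_i}(v_i)=1$, so that $k_P=\dim P+1$ and Theorem~\ref{main1} applies. You spell out the flag construction and the distinctness checks that the paper leaves implicit; the only (harmless) slip is that for $\dim P=0$ one has $\kk[P]\cong\kk[t_0]$ rather than $\kk$, but the class group is still trivial.
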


\begin{proof}
Let $n=\dim P$. We can choose $F_1,\ldots,F_{n+1} \in \Psi(P)$ with $\dim \bigcap_{l=1}^i F_l=n -i$ for each $1\le i \le n$ and $\bigcap_{l=1}^n F_l \nsubseteq F_{n+1}$, and choose $v_1,\ldots,v_{n+1} \in P\cap \ZZ^d$ with $v_i \notin F_i$ for each $1\le i \le n+1$ and $v_{i+1} \in \bigcap_{l=1}^i F_l$ for each $1\le i \le n$. By Lemma~\ref{compressed} and $\sum_{v \in P \cap \ZZ^d}d_F(v)\ZZ=\ZZ$ for any $F\in \Psi(P)$, one has $d_{F_i}(v_i)=1$ for each $1\le i \le n$. 
Therefore the $v_i$'s and $F_i$'s satisfy the statement ($*$).
\end{proof}

\medskip

\subsection{Main Result 2}


\begin{lem}[cf. {\cite[Lemma 9.3.2]{Villa}}]\label{d+1}
Let $\Delta \subset \RR^d$ be a $ (0,1)$-simplex of dimension $n$. Then $\kk[\Delta]$ is isomorphic to the polynomial ring with $n+1$ variables over $\kk$.
\end{lem}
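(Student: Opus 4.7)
The plan is to identify $\kk[\Delta]$ explicitly as a polynomial ring by verifying two things: first, that $\Delta$ contains no lattice points other than its $n+1$ vertices $v_0,\ldots,v_n$, and second, that the associated monomials $\tb^{v_0}t_0,\ldots,\tb^{v_n}t_0$ are algebraically independent over $\kk$. Together these imply that the surjective $\kk$-algebra homomorphism $\kk[x_0,\ldots,x_n]\to \kk[\Delta]$ sending $x_i\mapsto \tb^{v_i}t_0$ is an isomorphism.

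For the first step, I would take any $w\in\Delta\cap\ZZ^d$, write it as a convex combination $w=\sum_{i=0}^n\lambda_i v_i$ with $\lambda_i\ge 0$ and $\sum_i\lambda_i=1$, and exploit the $(0,1)$-constraint coordinate by coordinate. If $w_j=0$, the identity $\sum_i\lambda_i(v_i)_j=0$ together with nonnegativity of both factors forces $(v_i)_j=0$ whenever $\lambda_i>0$; if $w_j=1$, subtracting from $\sum_i\lambda_i=1$ gives $\sum_i\lambda_i\bigl(1-(v_i)_j\bigr)=0$, which forces $(v_i)_j=1$ whenever $\lambda_i>0$. Hence every vertex that enters the combination with positive weight agrees with $w$ in every coordinate, so $w\in\{v_0,\ldots,v_n\}$ as required.

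For the second step, suppose $\prod_i(\tb^{v_i}t_0)^{a_i}=\prod_i(\tb^{v_i}t_0)^{b_i}$ in $\kk[t_0,t_1^{\pm 1},\ldots,t_d^{\pm 1}]$ for some $a_i,b_i\in\ZZ_{\ge 0}$. Comparing the $t_0$-exponent yields $s:=\sum_i a_i=\sum_i b_i$, and comparing the remaining Laurent monomial yields $\sum_i a_i v_i=\sum_i b_i v_i$. The case $s=0$ is trivial; if $s>0$, dividing by $s$ and invoking the affine independence of $v_0,\ldots,v_n$ (which follows from $\dim\Delta=n$) forces $a_i=b_i$ for every $i$. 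Combining the two steps gives the desired isomorphism $\kk[\Delta]\cong\kk[x_0,\ldots,x_n]$. The only substantive point is the rigidity statement in the first step, and even there the $(0,1)$-hypothesis makes the argument essentially immediate; the rest is bookkeeping.
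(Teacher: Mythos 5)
Your proof is correct. Note that the paper does not actually prove this lemma: it is quoted directly from Villarreal's book (Lemma 9.3.2 of \cite{Villa}), so there is no in-paper argument to compare against. Your two steps are exactly the right ones and both are sound: the observation that a $(0,1)$-polytope lies in $[0,1]^d$, so an integral point $w\in\Delta$ has coordinates in $\{0,1\}$ and the coordinatewise rigidity argument forces $w$ to coincide with every vertex carrying positive barycentric weight, shows $\Delta\cap\ZZ^d$ consists only of the $n+1$ vertices; and since the toric ideal is generated by binomials, your verification that no nontrivial binomial lies in the kernel (via the $t_0$-grading plus affine independence of the vertices of an $n$-dimensional simplex) does establish that $I_\Delta=0$. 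The only point worth making explicit if you write this up is the standard fact that the kernel of $\phi_\Delta$ is generated by binomials, which is what reduces ``algebraic independence'' to your step two; with that remark included, the argument is complete and self-contained.
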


\begin{lem}\label{pyramid}
Let $P \subset \RR^d$ be an integral pyramid with basis $Q$ and apex $v_0$.
If $(P\setminus \{v_0\}) \cap \ZZ^d \subset Q$, then 
$$\kk[P] \cong \kk[Q] \otimes \kk[x_{v_0}].$$
In particular, $\Cl(\kk[P])\cong \Cl(\kk[Q])$ if $P$ is normal.
\end{lem}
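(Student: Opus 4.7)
The plan is to exhibit a direct product decomposition of the polytopal affine semigroup $\ZZ_{\ge 0}\calA(P)$ and then read off the ring isomorphism. First I would use the hypothesis $(P\setminus\{v_0\})\cap\ZZ^d\subset Q$ to conclude $P\cap\ZZ^d=(Q\cap\ZZ^d)\cup\{v_0\}$, so the generators of $\kk[P]$ are the $\tb^v t_0$ with $v\in Q\cap\ZZ^d$ together with one extra generator $\tb^{v_0}t_0$. The point is to argue that this extra generator is transcendental over $\kk[Q]$ inside $\kk[P]$, which is exactly what $\kk[P]\cong\kk[Q]\otimes_{\kk}\kk[x_{v_0}]$ says.

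Next I would prove the crucial uniqueness statement: if $u\in\ZZ_{\ge 0}\calA(P)$ admits two representations
\[ u=\sum_i a_i(v_i,1)+a_0(v_0,1)=\sum_i a_i'(v_i,1)+a_0'(v_0,1), \]
with $v_i\in Q\cap\ZZ^d$ and nonnegative integer coefficients, then $a_0=a_0'$. Indeed, rearranging and dividing by $a_0-a_0'$ (assumed nonzero) would exhibit $v_0$ as a rational affine combination of points of $Q\cap\ZZ^d$, forcing $v_0\in\aff(Q)$ and contradicting that $v_0$ is the apex of a pyramid with basis $Q$. This identifies $\ZZ_{\ge 0}\calA(P)$ with the direct sum of subsemigroups $\ZZ_{\ge 0}\calA(Q)\oplus\ZZ_{\ge 0}(v_0,1)$, and hence
\[ \kk[P]\cong\kk[\ZZ_{\ge 0}\calA(Q)]\otimes_{\kk}\kk[\ZZ_{\ge 0}(v_0,1)]\cong\kk[Q]\otimes_{\kk}\kk[x_{v_0}], \]
which is precisely the first claim.

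For the ``in particular'' statement, I would observe that $\kk[P]\cong\kk[Q][x_{v_0}]$ is a polynomial extension of $\kk[Q]$. If $P$ is normal, then $\kk[P]$ is normal, and a polynomial ring is normal if and only if its coefficient ring is, so $\kk[Q]$ is also normal. The divisor class group is invariant under polynomial extensions of normal domains, so $\Cl(\kk[P])\cong\Cl(\kk[Q][x_{v_0}])\cong\Cl(\kk[Q])$, as desired.

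There is no real obstacle here; the only substantive step is the uniqueness of the coefficient of $(v_0,1)$ in any nonnegative integer combination, and this reduces cleanly to the affine independence of $v_0$ from $Q$, which is built into the definition of a pyramid. The rest is formal.
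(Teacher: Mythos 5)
Your proof is correct, but it takes a genuinely different route from the paper's. The paper argues at the level of the toric ideal: it takes a minimal binomial generating set of $I_P$, evaluates the affine functional $u \mapsto \langle u, a_Q\rangle + b_Q$ (which vanishes on $Q$ and is strictly positive at $v_0$) on the two monomials of each generator, and concludes that $x_{v_0}$ cannot occur in any minimal generator, so that $I_P$ is extended from $I_Q$ and the tensor decomposition follows. You instead argue at the level of the affine semigroup, proving that the coefficient of $(v_0,1)$ in any $\ZZ_{\ge 0}$-representation is unique --- since two distinct coefficients would exhibit $v_0$ as an affine combination of points of $Q$, contradicting $v_0\notin\aff(Q)$ --- which identifies $\ZZ_{\ge 0}\calA(P)$ with the direct product of $\ZZ_{\ge 0}\calA(Q)$ and $\ZZ_{\ge 0}(v_0,1)$. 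The geometric input is the same in both cases ($v_0\notin\aff(Q)$, equivalently the facet functional of $Q$ is positive at $v_0$), but your semigroup-level version is arguably cleaner: it avoids choosing a minimal generating set of $I_P$ and the case analysis on which monomial of a binomial contains $x_{v_0}$. You also make explicit the ``in particular'' step (normality descends from $\kk[Q][x_{v_0}]$ to $\kk[Q]$, and $\Cl(A[x])\cong\Cl(A)$ for a normal domain $A$), which the paper leaves implicit. Both proofs are complete.
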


\begin{proof}
Let $f_1,\ldots,f_s$ be a minimal system of generators of $I_P$ and let $H(a_Q; b_Q)$ be a hyperplane which defines the facet $Q$ of $P$ with $P \subset H^{(+)}(a_Q;b_Q)$. Then $f_j$ is a homogeneous binomial, thus we can write $f_j=x_{u_{j1}}\cdots x_{u_{jn}}-x_{v_{j1}}\cdots x_{v_{jn}}$ where $u_{j1},\ldots, u_{jn}, v_{j1}, \ldots, v_{jn} \in P\cap \ZZ^d$ with $\sum_{i=1}^n u_{ji}=\sum_{i=1}^n v_{ji}$ and $\{u_{j1},\ldots,u_{jn}\}\cap \{v_{j1},\ldots,v_{jn}\}=\emptyset$. We may assume that $v_0 \notin \{v_{j1},\ldots,v_{jn}\}$. 

We calculate $\sum_{i=1}^n \left( \langle u_{ji},a_Q \rangle +b_Q \right) - \sum_{i=1}^n \left( \langle v_{ji},a_Q \rangle +b_Q \right)$:
\begin{align*}
\sum_{i=1}^n \left( \langle u_{ji},a_Q \rangle +b_Q \right) - \sum_{i=1}^n \left( \langle v_{ji},a_Q \rangle +b_Q \right)= \langle \sum_{i=1}^n u_{ji}-\sum_{i=1}^n v_{ji} , a_Q \rangle=0.
\end{align*}
On the other hand, if $v_0 \in \{u_{ji},\ldots,u_{ji}\}$, then 
\begin{align*}
\sum_{i=1}^n \left( \langle u_{ji},a_Q \rangle +b_Q \right) - \sum_{i=1}^n \left( \langle v_{ji},a_Q \rangle +b_Q \right)= \sum_{i=1}^n \left( \langle u_{ji},a_Q \rangle +b_Q \right) > 0,
\end{align*}
since $v_{ji}\in Q$ and $\langle v_{ji},a_Q \rangle +b_Q=0$ for each $1\le i \le n$, a contradiction. Therefore $v_0 \notin \{u_{ji},\ldots,u_{ji}\}$, that is, $f_1,\ldots,f_s \in I_Q$ and 
$$\kk[P]\cong \kk[x_v: v\in P\cap \ZZ^d]/I_P\cong \kk[x_v : v\in Q\cap \ZZ^d]/I_Q\otimes \kk[x_{v_0}]\cong \kk[Q]\otimes \kk[x_{v_0}].$$
\end{proof}

\begin{thm}\label{main2}
Let $P \subset \RR^d$ be a $(0,1)$-polytope. Then the following conditions are equivalent:
\begin{itemize}
\item[(i)] $P$ has $\dim P+2$ facets;
\item[(ii)] $\kk[P]$ is isomorphic to the Segre product of two polynomial rings over $\kk$ or its polynomial extension; 
\item[(iii)] $P$ is normal and $\Cl(\kk[P]) \cong \ZZ$.
\end{itemize}
In particular, $P$ is normal and $\Cl(\kk[P])$ is torsionfree if $P$ has at most $\dim P+2$ facets.
\end{thm}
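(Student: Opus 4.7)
The plan is to establish the cyclic implications (i) $\Rightarrow$ (ii) $\Rightarrow$ (iii) $\Rightarrow$ (i) and then deduce the final ``in particular'' clause. The bulk of the work lies in (i) $\Rightarrow$ (ii), which I would prove by induction on $d = \dim P$.

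For (i) $\Rightarrow$ (ii), the inductive step splits into two cases according to whether $P$ is simple. If $P$ is simple, then Lemma~\ref{Simple} gives $P = \Delta^{(1)} \times \cdots \times \Delta^{(k)}$ as a product of $(0,1)$-simplices, so $|\Psi(P)| = \sum_{i}(\dim \Delta^{(i)} + 1) = d + k$; the hypothesis $|\Psi(P)| = d + 2$ forces $k = 2$, and then $\kk[P] \cong \kk[\Delta^{(1)}] \# \kk[\Delta^{(2)}]$ is the Segre product of two polynomial rings by Lemma~\ref{d+1}. If $P$ is not simple, some vertex $v_0$ lies in at least $d + 1$ of the $d + 2$ facets, hence in exactly $d + 1$; let $Q$ be the unique facet not containing $v_0$. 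A standard polytope-theoretic argument then shows that $P$ is a pyramid with apex $v_0$ and base $Q$: any vertex $w \ne v_0$ must lie on some facet outside the $d + 1$ facets through $v_0$ (otherwise $\{w\} = \bigcap_{F \ni w} F$ would contain $v_0$, a contradiction), and the only such facet is $Q$, so $P = \conv(\{v_0\} \cup Q)$. Since every integral point of a $(0,1)$-polytope is a vertex, the hypothesis of Lemma~\ref{pyramid} is automatic, giving $\kk[P] \cong \kk[Q] \otimes \kk[x_{v_0}]$. The base $Q$ is a $(0,1)$-polytope of dimension $d - 1$ with $|\Psi(Q)| = |\Psi(P)| - 1 = (d - 1) + 2$ facets, so the inductive hypothesis applied to $Q$ completes this step.

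For (ii) $\Rightarrow$ (iii), normality and $\Cl \cong \ZZ$ for the Segre product of two polynomial rings (and its polynomial extensions) are classical and are precisely the characterization of $\mathbf{Order}_1$ recalled in the introduction. For (iii) $\Rightarrow$ (i), Theorem~\ref{thm:class} expresses $\Cl(\kk[P])$ as $\ZZ^{t} \oplus \bigoplus_{i} \ZZ/s_i \ZZ$ with $t = |\Psi(P)| - \dim P - 1$, so $\Cl(\kk[P]) \cong \ZZ$ forces $t = 1$ and hence $|\Psi(P)| = \dim P + 2$. For the final ``in particular'' clause, the only remaining possibility is $|\Psi(P)| = \dim P + 1$, in which case $P$ is a $(0,1)$-simplex and $\kk[P]$ is a polynomial ring by Lemma~\ref{d+1}, hence normal with trivial (in particular torsionfree) class group. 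The most delicate step I expect is the pyramid assertion in the non-simple case: the combinatorial argument that a vertex incident to all facets but one forces $P$ to be a pyramid is standard but must be spelled out carefully, after which the rest is a straightforward induction combined with the lemmas already available in Section~2.
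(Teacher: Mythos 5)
Your proposal is correct and follows essentially the same route as the paper: reduce (i)$\Rightarrow$(ii) via the ``simple or pyramid'' dichotomy, handle the simple case with Lemma~\ref{Simple} and Lemma~\ref{d+1}, peel off pyramid apexes with Lemma~\ref{pyramid}, and get (ii)$\Rightarrow$(iii) from the known structure of Segre products and (iii)$\Rightarrow$(i) from the rank formula in Theorem~\ref{thm:class}. The only difference is that you spell out the argument that a non-simple polytope with $\dim P+2$ facets is a pyramid (and that integral points of a $(0,1)$-polytope are vertices), which the paper asserts without proof; your version is a correct filling-in of that detail rather than a different approach.
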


\begin{proof}
(i)$\Rightarrow$(ii): A polytope $P$ which has $\dim P+2$ facets is a simple polytope or a pyramid. 
If $P$ is an integral pyramid with basis $Q$, then $Q$ has $\dim Q +2$ facets. Thus $Q$ is a simple polytope or a pyramid again. Therefore, we consier $P$ in the case where $P$ is simple. From Lemma~\ref{Simple}, $P$ is equal to a product of $(0,1)$-simplices $\Delta_1,\ldots,\Delta_m$. Since $\Delta_i$ has $\dim \Delta_i +1$ facets for each $i$, we see that $P$ has $\sum_{i=1}^m (\dim \Delta_i +1)=\dim P +m$ facets. Thus, we have $m=2$, and hence $\kk[P]$ is isomorphic to the Segre product of two polynomial rings or its polynomial extension by Lemma~\ref{d+1} and Lemma~\ref{pyramid}.

(ii)$\Rightarrow$(iii): It is known that the Segre product of two polynomial rings over $\kk$ is normal and its divisor class group is isomorphic to $\ZZ$. 
In fact, the Segre product of some polynomial rings is realized as a Hibi ring (see, e.g., \cite[Example 2.6]{HN}). 
Hibi rings are normal (\cite{H87}) and the description of their divisor class groups is provided in [5].
Thus, $P$ is normal from Proposition~\ref{thm:equi_nomal} and $\Cl(\kk[P])\cong\ZZ$.

(iii)$\Rightarrow$(i): From Theorem~\ref{thm:class}, the rank of $\Cl(\kk[P])$ is equal to $|\Psi(P)|-(\dim P+1)$. Therefore $|\Psi(P)|=\dim P +2$.
\end{proof}

\smallskip

Recall that ${\bf Order}_n$, ${\bf Stab}_n$ and ${\bf Edge}_n$ are the sets of isomorphic classes of Hibi rings, stable set rings of perfect graphs and edge rings of graphs satisfying the odd cycle condition whose divisor class groups have rank $n$, respectively.
From Theorem~\ref{main2}, the set of isomorphic classes of toric rings of normal $(0,1)$-polytopes whose divisor class groups have rank 1 is equal to ${\bf Order}_1(={\bf Stab}_1={\bf Edge}_1)$.

We completely characterize the toric rings of normal $(0,1)$-polytopes $P$ such that $\Cl(\kk[P])\cong\ZZ$.
But, the characterization of the case $\Cl(\kk[P])\cong\ZZ^2$ is more difficult.
In fact, while ${\bf Stab}_2\cup {\bf Edge}_2 \subset {\bf Order}_2$, there exist a normal $(0,1)$-polytope $P$ such that $\Cl(\kk[P])\cong\ZZ^2$ and $\kk[P]\notin {\bf Order}_2$.

\begin{ex}
Let $P=\conv(\{(0,0,0),(1,0,0),(0,1,0),(0,0,1),(1,1,1)\})$. 
Then the set of support hyperplanes of $P$ is:
$$\{H(\eb_1;0), H(\eb_2;0),H(\eb_3;0),H(\eb_1-\eb_2-\eb_3;1),H(-\eb_1+\eb_2-\eb_3;1),H(-\eb_1-\eb_2+\eb_3;1)\},$$
where $\eb_i$ denotes the $i$-th unit vector of $\RR^3$ for $i\in \{1,2,3\}$.
We can see that $P$ is normal and $\Cl(\kk[P])\cong\ZZ^2$. On the other hand, if an order polytope $\calO$ satisfies $\dim \calO=3$ and $\kk[\calO]\in {\bf Order}_2$, then $\calO$ is the 3-dimensional unit hypercube.
Therefore, we have $\kk[P]\notin {\bf Order}_2$.
\end{ex}


\end{document}